\documentclass{amsart}
\usepackage{graphicx}
\usepackage{latexsym}
\usepackage{hyperref}
\usepackage{amsfonts}
\usepackage[all]{xy}
\usepackage{amssymb, mathrsfs, amsfonts, amsmath, mathtools}
\usepackage{amsbsy}
\usepackage{amsfonts}
\newtheorem{definition}{Definition}
\newtheorem{lemma}{Lemma}
\newtheorem{proposition}{Proposition}
\newtheorem{remark}{Remark}
\newtheorem{theorem}{Theorem}

\numberwithin{equation}{section}

\begin{document}
	
	\title[Notes on a Cotton-type tensor for electrostatic systems]{Notes on a Cotton-type tensor for electrostatic systems}

	\author{R\'obson Lousa}
	\address{Federal Institute of Goiás, Campus Uruaçu, Uruaçu, GO, Brazil.}
	\curraddr{}
	\email{robson.lousa@ifg.edu.br}
	
	\subjclass[2020]{83C22, 83C05, 53C18.}
	
	\date{}
	
	\dedicatory{}
	
	\begin{abstract}
		We introduce a natural $(0,3)$-tensor associated with electrostatic systems in arbitrary dimensions. This tensor arises from the comparison between the Cotton and Weyl decompositions of the Riemann curvature tensor and extends several identities previously known in dimensions three and four. We prove that it is totally trace-free and satisfies the same algebraic symmetries as the Cotton tensor. We further investigate its behavior under the natural assumption that the electric field is collinear with the gradient of the lapse function, obtaining a simplified expression. Along the way, we extend a boundary collinearity result to arbitrary dimensions and derive several identities that may be useful in the study of electrostatic systems.
	\end{abstract}
	
	\maketitle
	
	\section{Introduction and main results}
	
	Electrostatic systems have recently attracted considerable attention in Differential Geometry and Mathematical Relativity. They naturally arise as the spatial part of static solutions to the Einstein--Maxwell equations and provide a rich interaction between geometric analysis, curvature identities and rigidity phenomena \cite{leandro2023geometry,leandro2024electrostatic,cederbaum2016uniqueness,chrusciel2005non,chrusciel2017non,tiarlos}.
	
	Two equivalent formulations of the electrostatic system appear in the literature. The first, considered in \cite{leandro2023geometry}, is expressed in terms of the pair $(f,\psi)$, where $f$ denotes the lapse function and $\psi$ is the electric potential. In this setting, the authors obtained several classification and rigidity results, both in arbitrary dimensions and in dimension three. A second formulation, adopted in \cite{leandro2024electrostatic,tiarlos}, is written in terms of the lapse function $f$ and the electric field $E$, leading to the quadruple $(M^n,g,f,E)$.
	
	These two descriptions are equivalent. Indeed, if $f>0$ on $M$, then one simply defines
	\[
	E=\frac{\nabla\psi}{f},
	\]
and the corresponding equations are equivalent. Throughout this paper we adopt the formulation in terms of the electric field.

\begin{definition}\label{def1}
	Let $(M^n,g)$ be a Riemannian manifold and let $E$ be a vector field on $M$ (the electric field) and $f\in C^\infty(M)$ (the lapse function). We say that $(M^n,g,f,E)$ satisfies the \emph{electrostatic system with cosmological constant $\Lambda$} if
	\begin{equation}\label{s1}
		\begin{array}{rcl}
			\nabla^2f&=&f\left(\mathrm{Ric}-\dfrac{2}{n-1}\Lambda g+2E^\flat\otimes E^\flat-\dfrac{2}{n-1}|E|^2g\right),\\\\
			\Delta f&=&2f\left(\dfrac{n-2}{n-1}|E|^2-\dfrac{\Lambda}{n-1}\right),\\[0.2cm]
			\mathrm{div}(E)&=&0,\qquad d(fE^\flat)=0.
		\end{array}
	\end{equation}
	Here $\mathrm{Ric}$, $\nabla^2$, $\mathrm{div}$ and $\Delta$ denote, respectively, the Ricci tensor, Hessian, divergence and Laplacian with respect to $g$, and $E^\flat$ is the $g$-dual one-form associated with $E$.	The condition $d(fE^\flat)=0$ is the coordinate-free formulation of the Maxwell equation and corresponds to the vanishing of the curl of $fE$ in the three-dimensional setting.  The Riemannian manifold $(M^n,g)$ is called the \emph{spatial factor} of the corresponding static electrostatic spacetime.
\end{definition}
	
Throughout the paper we shall use the following standard facts (see, for instance,
\cite{leandro2023geometry,leandro2024electrostatic,chrusciel2005non,chrusciel2017non,tiarlos}):

\begin{enumerate}
	\item[$\circ$] $f>0$ on $M$;
	\item[$\circ$] if $M$ has nonempty boundary $\Sigma$, then
	\[
	\Sigma=f^{-1}(0).
	\]
\end{enumerate}

The first goal of this paper is to extend a boundary property of the electric field established in dimension three by Cruz, Lima and Sousa \cite{tiarlos}.

\begin{lemma}\label{lemmafronteira}
	Let $(M^n,g,f,E)$ be an $n$-dimensional electrostatic system, $n\ge2$. Suppose that
	\[
	\Sigma=f^{-1}(0)
	\]
	is a regular level set of $f$. Then $E$ is normal to $\Sigma$. More precisely, there exists a smooth function $\rho$ on $\Sigma$ such that
	\[
	E=\rho\nabla f
	\qquad\text{along }\Sigma.
	\]
\end{lemma}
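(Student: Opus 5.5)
The plan is to use the Maxwell equation $d(fE^\flat)=0$ and restrict it to the boundary. Expanding, we have
\[
d(fE^\flat)=df\wedge E^\flat+f\,dE^\flat=0 .
\]
Along $\Sigma=f^{-1}(0)$ the second term vanishes because $f=0$ there, and $dE^\flat$ is smooth up to the boundary. So on $\Sigma$ we get
\[
df\wedge E^\flat=0 .
\]
Since $\Sigma$ is a regular level set, $df\neq 0$ at every point of $\Sigma$. A standard linear algebra fact then applies: if $\alpha\neq0$ and $\alpha\wedge\beta=0$ for one-forms $\alpha,\beta$, then $\beta=\rho\,\alpha$ for a unique scalar $\rho$. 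To see this, complete $\alpha$ to a coframe and read off the coefficients of $\alpha\wedge\beta$. Hence $E^\flat=\rho\,df$ pointwise on $\Sigma$, that is, $E=\rho\nabla f$. Since $\nabla f$ is normal to the level set $\Sigma$, $E$ is normal to $\Sigma$.

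Smoothness of $\rho$ comes from the explicit formula
\[
\rho=\frac{g(E,\nabla f)}{|\nabla f|^2}\quad\text{on }\Sigma ,
\]
which is smooth because $|\nabla f|\neq0$ on $\Sigma$ and $E$, $f$ are smooth. This is the formula obtained by pairing $E=\rho\nabla f$ with $\nabla f$.

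For $n=2$ the argument is unchanged: $df\wedge E^\flat=0$ is a $2$-form identity, and it says exactly that $E^\flat$ and $df$ are linearly dependent. The only step needing care is the restriction argument, namely that $f\,dE^\flat$ vanishes on $\Sigma$. This requires $E$ to be smooth, or at least $C^1$, up to $\Sigma$, and I will assume this as part of the standing regularity of the system. No other equation of the system is needed.
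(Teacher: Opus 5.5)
Your proposal is correct and follows essentially the same route as the paper. Both arguments expand $d(fE^\flat)=df\wedge E^\flat+f\,dE^\flat$ and use $f=0$ on $\Sigma$ to get $df\wedge E^\flat=0$ there, then conclude $E^\flat=\rho\,df$ from $df\neq0$; the paper evaluates on $(\nabla f,X)$ with $X\in T_p\Sigma$ where you use a coframe, which is the same linear algebra. Your explicit formula $\rho=g(E,\nabla f)/|\nabla f|^2$ also justifies the smoothness of $\rho$, which the paper only asserts.
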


The above lemma shows that the electric field is necessarily normal to the horizon. This boundary behavior naturally motivates the study of electrostatic systems under the assumption that the electric field remains collinear with the gradient of the lapse function throughout the manifold.

The main objective of this paper is to introduce a natural $(0,3)$-tensor associated with electrostatic systems. The introduction of auxiliary tensors has proved to be a fruitful approach in the study of several geometric structures. A remarkable example is the $D$-tensor introduced by Cao and Chen \cite{cao} for gradient Ricci solitons, which has played a central role in rigidity and classification results. Similar ideas have subsequently been employed in the study of gradient Yamabe solitons \cite{sun}, generalized quasi-Einstein manifolds \cite{catino2012note,deng2015note,neto2016generalized}, Einstein-type manifolds \cite{leandro2021}, and manifolds satisfying curvature conditions involving the Weyl tensor \cite{catino2017gradient,catino-mastrolia-monticelli-punzo}.

Motivated by these developments, we introduce a new tensor $V$ for electrostatic systems. Our construction extends the tensor obtained by Leandro, Andrade and Lousa \cite{leandro2023geometry} for the formulation $(M^n,g,f,\psi)$ and the tensor introduced by Leandro and Lousa \cite{leandro2024electrostatic} in dimension three. Although inspired by the $D$-tensor, the tensor $V$ reflects the additional geometric structure carried by the electromagnetic field and therefore contains new terms that are absent in the purely Riemannian setting. As we shall prove, $V$ naturally relates the Cotton and Weyl tensors through the identity
\[
fC=W(\nabla f)+V,
\]
and provides a convenient framework for deriving new geometric identities in arbitrary dimensions.

\begin{theorem}\label{principal}
	Let $(M^n,g,f,E)$ be an electrostatic system. Then
	\[
	fC=W(\nabla f)+V,
	\]
	where $C$ and $W$ denote the Cotton and Weyl tensors, respectively.
\end{theorem}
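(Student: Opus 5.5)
The approach is to compute the Cotton tensor directly from the first equation of \eqref{s1}. This lets us read off $V$ as whatever remains after the Weyl contribution $W(\nabla f)$ is isolated. We use the conventions
\[
C_{ijk}=\nabla_iR_{jk}-\nabla_jR_{ik}-\frac{1}{2(n-1)}\big(\nabla_iR\,g_{jk}-\nabla_jR\,g_{ik}\big),\qquad
W(\nabla f)_{ijk}=W_{ijkl}\nabla_lf .
\]
First, write the Ricci tensor from \eqref{s1} as
\[
fR_{jk}=\nabla_j\nabla_kf-2fE_jE_k+\frac{2}{n-1}\big(\Lambda+|E|^2\big)fg_{jk}.
\]
Taking the trace and using the Laplacian equation gives $R=\frac{2\Lambda n}{n-1}+\frac{2(n-2)}{n-1}|E|^2$ up to routine simplification. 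In particular $\nabla R$ is a multiple of $\nabla|E|^2$.

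Next, apply $\nabla_i$ to $fR_{jk}$ and antisymmetrize in $i,j$. The left-hand side gives $f(\nabla_iR_{jk}-\nabla_jR_{ik})+\nabla_if R_{jk}-\nabla_jfR_{ik}$. On the right-hand side, the Ricci identity gives $\nabla_i\nabla_j\nabla_kf-\nabla_j\nabla_i\nabla_kf=R_{ijkl}\nabla_lf$, up to the sign convention.

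We then decompose $R_{ijkl}$ into $W_{ijkl}$ plus the Schouten part $A\owedge g$. The Schouten part produces terms of the form $R_{ik}\nabla_jf$, $R\,g_{ik}\nabla_jf$ and $R_{il}\nabla_lf\,g_{jk}$, together with their antisymmetrizations. The terms $\nabla_ifR_{jk}-\nabla_jfR_{ik}$ partly cancel against these. Wherever $\mathrm{Ric}$ still appears, we replace it using the first equation of \eqref{s1}, so that everything is expressed through $\nabla^2 f/f$, $E$ and $g$. In particular, $R_{il}\nabla_lf$ becomes $\frac1f\nabla_i\nabla_lf\nabla_lf-2\langle E,\nabla f\rangle E_i+\dots$.

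The remaining electric terms come from $-2\nabla_i(fE_jE_k)$ antisymmetrized. Here the Maxwell equation $d(fE^\flat)=0$, i.e. $\nabla_i(fE_j)=\nabla_j(fE_i)$, is the key input. It shows that the antisymmetrization of $\nabla_i(fE_j)E_k$ vanishes. What survives is $-2fE_j\nabla_iE_k+2fE_i\nabla_jE_k$, together with the terms from $\nabla_i\big((\Lambda+|E|^2)f\big)g_{jk}$. Finally we subtract the scalar-curvature correction $\frac{f}{2(n-1)}(\nabla_iR\,g_{jk}-\nabla_jR\,g_{ik})$, using the formula for $\nabla R$ in terms of $\nabla|E|^2$.

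Collecting terms gives $fC_{ijk}=W_{ijkl}\nabla_lf+V_{ijk}$. Here $V$ is explicitly built from $\nabla f$, $E$, $\nabla E$, $\mathrm{Ric}$ (or equivalently $\nabla^2f$) and $g$, and antisymmetric in $i,j$. We take this expression as the definition of $V$, presumably matching the tensor of \cite{leandro2023geometry} when $E=\nabla\psi/f$.

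The main obstacle is the bookkeeping. The Schouten contributions have coefficients $\frac{1}{n-2}$ and $\frac{R}{(n-1)(n-2)}$, and these must combine with the $\frac{2}{n-1}(\Lambda+|E|^2)$ terms and the scalar-curvature correction. Only then do the $\Lambda$-terms cancel completely and the $|E|^2$-terms assemble into the correct $g_{jk}\nabla_if-g_{ik}\nabla_jf$ structure. I would first verify the cancellation of all $\Lambda$-dependent terms as a consistency check, since $V$ should not depend on $\Lambda$ explicitly. After that I would fix the final grouping of $V$ so that its trace-free property, proved afterwards, becomes transparent.
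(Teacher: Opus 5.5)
Your strategy is the paper's: differentiate the static equation, antisymmetrize in $i,j$, apply the Ricci identity and the Weyl decomposition, and use $d(fE^\flat)=0$ to kill $\bigl(\nabla_i(fE^\flat_j)-\nabla_j(fE^\flat_i)\bigr)E^\flat_k$. What survives is $-2f(E^\flat_j\nabla_iE^\flat_k-E^\flat_i\nabla_jE^\flat_k)$. Differentiating $f\,\mathrm{Ric}$ rather than $\nabla^2 f$ via \eqref{combinado} is immaterial.

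However, the one non-formal identity you need is stated wrongly. Tracing the first equation of \eqref{s1} and comparing with the Laplace equation gives $R=2(|E|^2+\Lambda)$, which is \eqref{rrr}. It does not give $\frac{2n\Lambda}{n-1}+\frac{2(n-2)}{n-1}|E|^2$. This is not cosmetic, because this identity is what closes the computation:
\begin{itemize}
\item It turns $\frac{2}{n-1}(\Lambda+|E|^2)(\nabla_if\,g_{jk}-\nabla_jf\,g_{ik})$ into $\frac{R}{n-1}(\nabla_if\,g_{jk}-\nabla_jf\,g_{ik})$. So the $\Lambda$-terms are absorbed into $R$, not cancelled.
\item It turns $\frac{2f}{n-1}\nabla|E|^2$ into $\frac{f}{n-1}\nabla R$. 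After subtracting the Cotton correction, this leaves the coefficient $\frac{f}{2(n-1)}$ of $\nabla_iR\,g_{jk}-\nabla_jR\,g_{ik}$ that appears in $V$.
\end{itemize}
With your formula, $\frac{2f}{n-1}\nabla|E|^2$ becomes $\frac{f}{n-2}\nabla R$. That coefficient would then come out as $\frac{f}{n-2}-\frac{f}{2(n-1)}$, with stray $\Lambda$ or $|E|^2$ terms left over. So the remainder you compute would not equal $fC-W(\nabla f)$.

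Second, the theorem is about the explicit tensor of Definition~\ref{defV}. Declaring ``whatever remains'' to be $V$ does not prove it; you have to collect terms and match them. Replacing the remaining Ricci terms by $\nabla^2f/f-2E^\flat\otimes E^\flat+\cdots$ moves you away from that form. The paper keeps $\mathrm{Ric}$ and $R$ and simply collects:
\begin{itemize}
\item $1+\frac{1}{n-2}=\frac{n-1}{n-2}$ for $R_{ik}\nabla_jf-R_{jk}\nabla_if$. The term $\nabla_ifR_{jk}-\nabla_jfR_{ik}$ reinforces the Schouten contribution rather than partly cancelling it.
\item $\frac{1}{n-1}+\frac{1}{(n-1)(n-2)}=\frac{1}{n-2}$ for $R(\nabla_if\,g_{jk}-\nabla_jf\,g_{ik})$.
\item $\frac{1}{n-2}(R_{jl}\nabla^lf\,g_{ik}-R_{il}\nabla^lf\,g_{jk})$, taken directly from the Weyl decomposition.
\end{itemize}
With \eqref{rrr} corrected and this bookkeeping carried out, your outline becomes exactly the paper's derivation of \eqref{final}.
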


We also prove that the tensor $V$ is totally trace-free and satisfies the same algebraic symmetries as the Cotton tensor.

In the final section we investigate the tensor $V$ under the natural collinearity assumption
\[
E=\rho\nabla f.
\]
Unlike the three-dimensional case considered in \cite{leandro2024electrostatic}, we derive an explicit formula for $\nabla\rho$, which enables us to express the tensor $V$ solely in terms of the geometric data of the electrostatic system. We also extend to arbitrary dimensions the function $Q$ introduced in \cite{leandro2024electrostatic} and establish a density result for the set $\{Q\neq0\}$.

\begin{lemma}\label{lemmabom}
	The set
	\[
	\{Q\neq0\}
	\]
	is dense in every electrostatic system satisfying
	\[
	E=\rho\nabla f.
	\]
\end{lemma}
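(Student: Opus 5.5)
The plan is to argue by contradiction, combining a local computation on an open set where $Q$ vanishes with the real analyticity of electrostatic systems. Suppose $\{Q\neq0\}$ is not dense. Then there is a nonempty open set $U\subset M$ with $Q\equiv0$ on $U$. Since we may shrink $U$, I would first arrange that $\nabla f\neq0$ on $U$. If instead $\nabla f\equiv0$ on some open subset, then $f$ is constant there, and the second equation in \eqref{s1} gives $(n-2)|E|^2=\Lambda$. Moreover $E=\rho\nabla f=0$ there, which forces $\Lambda=0$. The first equation of \eqref{s1} then gives $\mathrm{Ric}=0$ on that subset, and this degenerate (vacuum, constant-lapse) situation can be handled or excluded separately.

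On the set $U$, where $\nabla f\neq0$, I would use the explicit formula for $\nabla\rho$ obtained in the final section from $d(fE^\flat)=0$ and $\mathrm{div}(E)=0$ with $E=\rho\nabla f$. Two pieces of information are available:
\begin{itemize}
\item $d(f\rho\, df)=0$ shows that $d(f\rho)\wedge df=0$, so $\rho$ is locally a function of $f$ on the regular set;
\item the divergence condition gives $\langle\nabla\rho,\nabla f\rangle=-\rho\Delta f$.
\end{itemize}
Using these, I would differentiate the identity $Q=0$ along $U$. Since $Q$ is built from $f$, $\rho$ and $|\nabla f|^2$ (as in \cite{leandro2024electrostatic}), every term can be rewritten via \eqref{s1}. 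In particular $\nabla|\nabla f|^2=2\nabla^2f(\nabla f,\cdot)$ is expressed through $\mathrm{Ric}(\nabla f)$, and then $\mathrm{Ric}(\nabla f,\nabla f)$ is controlled by the structure equations. The goal is to obtain a nontrivial algebraic relation, for instance a polynomial identity in $f$, $\rho$, $|\nabla f|^2$ and $\Lambda$. This relation should either be impossible or force $\rho\equiv0$ or $\nabla f\equiv0$ on $U$, contradicting the choice of $U$ or the nontriviality of the system.

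To pass from ``contradiction on $U$'' to the global statement, or to rule out the degenerate alternatives, I would invoke analyticity. The system \eqref{s1} is elliptic in harmonic coordinates, so $g$, $f$ and $E$ are real analytic. Hence $\rho=\langle E,\nabla f\rangle/|\nabla f|^2$ is analytic on the dense open set $\{\nabla f\neq0\}$, and so is $Q$. If $Q$ vanished on an open set, it would vanish on the whole connected regular region. Then the relation obtained above would hold globally, which can be tested near the horizon $\Sigma=f^{-1}(0)$ (using Lemma \ref{lemmafronteira} and $f=0$ there) or at infinity to produce the contradiction.

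The main obstacle I expect is the second step, namely showing that differentiating $Q=0$ really yields a nondegenerate constraint rather than an identity automatically satisfied by the system. If the first derivative is inconclusive, I would take a second derivative, that is, the Hessian of $Q$ contracted with $\nabla f$. That computation brings in $\nabla\mathrm{Ric}(\nabla f,\nabla f,\nabla f)$, which I would control through the Cotton tensor via Theorem \ref{principal} and the vanishing of $V$-type terms in the collinear case.
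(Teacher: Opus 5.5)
Your proposal never carries out the decisive computation, and the outcome it expects (a relation forcing $\rho\equiv0$ or $\nabla f\equiv0$) is not what comes out. In this paper $Q=n-1-2(n-2)f^2\rho^2$ involves only $f$ and $\rho$. Suppose $Q\equiv0$ on an open set $\Omega$. One differentiation gives $f\rho^2\nabla f+f^2\rho\nabla\rho=0$. Pairing with $\nabla f$ and dividing by $f>0$ yields $|E|^2+f\rho\langle\nabla\rho,\nabla f\rangle=0$. On the other hand, combine your identity $\langle\nabla\rho,\nabla f\rangle=-\rho\Delta f$, the $\Delta f$ equation in \eqref{s1}, and $f^2\rho^2=\frac{n-1}{2(n-2)}$ on $\Omega$. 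This gives
\[
f\rho\langle\nabla\rho,\nabla f\rangle=\frac{1}{n-2}\bigl(\Lambda-(n-2)|E|^2\bigr).
\]
Comparing the two, the $|E|^2$ terms cancel and all you obtain is $\Lambda=0$. This is exactly the paper's argument, which then concludes by invoking $\Lambda\neq0$ as a standing assumption. That hypothesis is the missing ingredient in your plan. Once you have it, the contradiction is pointwise on $\Omega$. You need neither the restriction to $\{\nabla f\neq0\}$ nor analyticity. Your degenerate case also dies at once: there $E=0$ and $\Delta f=0$, which force $f\Lambda=0$.

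Without $\Lambda\neq0$, no refinement of your plan can work, because the lemma as literally stated is then false. For $\Lambda=0$ the Majumdar--Papapetrou solutions satisfy $E=\rho\nabla f$ with $f^2\rho^2\equiv\frac{n-1}{2(n-2)}$, so $Q\equiv0$ on all of $M$. For instance, for $n=3$ the extremal Reissner--Nordstr\"om exterior has $f=1-m/r$ and $E=\nabla f/f$, so $\rho=1/f$ and $Q=2-2f^2\rho^2\equiv0$. Since these are genuine solutions, any identity you extract from higher derivatives of $Q$ or from the Cotton/Weyl relation is automatically satisfied. The horizon test is also unavailable: their horizon is degenerate ($\nabla f\to0$), not a regular level set. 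Your horizon idea does give something in a restricted setting. If $\Sigma=f^{-1}(0)$ is a regular boundary and $\rho$ is smooth up to $\Sigma$, then $f\rho\to0$, so $Q\to n-1$ near $\Sigma$. But upgrading this to density by analytic continuation needs extra care, since $\{\nabla f\neq0\}$ need not be connected. It is also a different statement from the one posed.
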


The paper is organized as follows. In Section~2 we collect some basic identities for electrostatic systems. In Section~3 we introduce the tensor $V$ and establish its fundamental algebraic properties. Section~4 is devoted to the study of $V$ under the collinearity assumption $E=\rho\nabla f$, where we also prove a density result for the generalized function $Q$.
	
\section{Preliminaries}

In this section we collect some basic identities satisfied by electrostatic
systems that will be repeatedly used throughout the paper. Most of these
relations are direct consequences of the electrostatic system
\eqref{s1} and are well known in the literature
\cite{leandro2023geometry,leandro2024electrostatic,tiarlos}.

Contracting the first equation of \eqref{s1} and using the Laplace equation (see \cite{leandro2024electrostatic,tiarlos}), we obtain .
	
\begin{equation}\label{rrr}
	R=2(|E|^2+\Lambda).
\end{equation}

Observe that this identity is independent of the dimension $n$ and expresses
the scalar curvature entirely in terms of the electric field and the
cosmological constant.
	
Substituting \eqref{rrr} into the first equation of \eqref{s1}, we obtain the
equivalent formulation

\begin{equation}\label{combinado}
	\nabla^2f=f\left(\mathrm{Ric}+2E^\flat\otimes E^\flat -\dfrac{R}{n-1}g\right).
\end{equation}

The Maxwell equation also admits the following coordinate expression, which
will be frequently used in the sequel. We define $E^\flat_i:=E^\flat(e_i)$
\begin{lemma}\label{lemmaMaxwell}
	Let $(M^n,g,f,E)$ be an electrostatic system. Then
	\[
	f(\nabla_iE^\flat_j-\nabla_jE^\flat_i)
	=
	\nabla_jfE^\flat_i-\nabla_ifE^\flat_j.
	\]
\end{lemma}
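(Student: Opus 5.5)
The identity follows by writing the Maxwell equation $d(fE^\flat)=0$ from Definition~\ref{def1} in a local orthonormal frame $\{e_i\}$ and expanding with the Leibniz rule. I will treat $fE^\flat$ as a one-form and use that the Levi-Civita connection is torsion-free. Then the exterior derivative of any one-form $\alpha$ is the antisymmetrized covariant derivative:
\[
d\alpha(e_i,e_j)=\nabla_i\alpha_j-\nabla_j\alpha_i .
\]
So the first step is to record that $d(fE^\flat)=0$ is equivalent to
\[
\nabla_i(fE^\flat_j)-\nabla_j(fE^\flat_i)=0
\]
for all $i,j$.

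Next I expand each term by the product rule, $\nabla_i(fE^\flat_j)=\nabla_if\,E^\flat_j+f\nabla_iE^\flat_j$, and similarly with $i$ and $j$ exchanged. Substituting gives
\[
f(\nabla_iE^\flat_j-\nabla_jE^\flat_i)+\nabla_ifE^\flat_j-\nabla_jfE^\flat_i=0 .
\]
Moving the gradient terms to the right-hand side yields exactly the asserted identity,
\[
f(\nabla_iE^\flat_j-\nabla_jE^\flat_i)=\nabla_jfE^\flat_i-\nabla_ifE^\flat_j .
\]

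There is no real obstacle here. The only point needing care is the sign and normalization convention for $d$. Some conventions include a factor $\tfrac12$ in the formula for $d\alpha$, but such a factor is irrelevant because the right-hand side vanishes. I also note that positivity of $f$ is not needed, so the identity holds even up to the boundary $\Sigma$.
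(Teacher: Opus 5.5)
Your proposal is correct and follows essentially the same route as the paper: expand $d(fE^\flat)=0$ as the antisymmetrized covariant derivative (torsion-freeness) and apply the Leibniz rule. Your signs check out, and you are right that the normalization of $d$ and the positivity of $f$ play no role.
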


\begin{proof}
	The identity follows immediately from the equation
	\[
	d(fE^\flat)=0.
	\]
\end{proof}

The electrostatic system also satisfies $\mathrm{div}(E)=0$ that is
\[
\nabla_iE^i=0,
\]
which will play an important role under the collinearity assumption considered
later.

\section{The electrostatic tensor}\label{tensor}
Taking the covariant derivative of \eqref{combinado} and using the Ricci identity, we obtain
\begin{align}
	R_{ijkl}\nabla^lf
	={}&\,\nabla_i\nabla_j\nabla_kf
	-\nabla_j\nabla_i\nabla_kf\notag\\
	={}&
	(\nabla_if\,R_{jk}
	-\nabla_jf\,R_{ik}) \notag\\
	&
	-\frac{R}{n-1}
	\left(
	\nabla_if\,g_{jk}
	-
	\nabla_jf\,g_{ik}
	\right) \notag\\
	&
	+f(\nabla_iR_{jk}-\nabla_jR_{ik}) \notag\\
	&
	+2f\left(
	E^\flat_j\nabla_iE^\flat_k
	-
	E^\flat_i\nabla_jE^\flat_k
	\right)\notag\\
	&
	-\frac{f}{n-1}
	\left(
	\nabla_iR\,g_{jk}
	-
	\nabla_jR\,g_{ik}
	\right).
\end{align}
The Cotton tensor is given by
\begin{equation}
	C_{ijk}
	=
	\nabla_iR_{jk}
	-
	\nabla_jR_{ik}
	-
	\frac{1}{2(n-1)}
	\left(
	\nabla_iR\,g_{jk}
	-
	\nabla_jR\,g_{ik}
	\right).
\end{equation}

So 

\begin{align}
	R_{ijkl}\nabla^lf
	={}&\,fC_{ijk}+
	(\nabla_if\,R_{jk}
	-\nabla_jf\,R_{ik}) \notag\\
	&
	-\frac{R}{n-1}
	\left(
	\nabla_if\,g_{jk}
	-
	\nabla_jf\,g_{ik}
	\right) \notag\\
	&
	-\frac{f}{2(n-1)}
	\left(
	\nabla_iR\,g_{jk}
	-
	\nabla_jR\,g_{ik}
	\right) \notag\\
	&
	+2f\left(
	E^\flat_j\nabla_iE^\flat_k
	-
	E^\flat_i\nabla_jE^\flat_k
	\right).
\end{align}

Weyl tensor 

\begin{equation}
	\begin{aligned}
		R_{ijkl}
		={}&\,W_{ijkl}
		+\frac{1}{n-2}
		\left(
		R_{ik}g_{jl}
		-R_{il}g_{jk}
		+R_{jl}g_{ik}
		-R_{jk}g_{il}
		\right)\\
		&
		-\frac{R}{(n-1)(n-2)}
		\left(
		g_{ik}g_{jl}
		-g_{il}g_{jk}
		\right).
	\end{aligned}
\end{equation}
Then
\begin{align}
	R_{ijkl}\nabla^l f
	={}&\,W_{ijkl}\nabla^l f \notag\\
	&
	+\frac{1}{n-2}
	\left(
	R_{ik}\nabla_jf
	-R_{il}\nabla^lf\,g_{jk}
	+R_{jl}\nabla^lf\,g_{ik}
	-R_{jk}\nabla_if
	\right)\notag\\
	&
	-\frac{R}{(n-1)(n-2)}
	\left(
	\nabla_jf\,g_{ik}
	-\nabla_if\,g_{jk}
	\right).
\end{align}

Comparing,
\begin{align}\label{final}
	fC_{ijk}
	={}&
	W_{ijkl}\nabla^lf + \frac{n-1}{n-2}
	\left(
	R_{ik}\nabla_jf
	-
	R_{jk}\nabla_if
	\right)\notag\\
	&
	+\frac{1}{n-2}
	\left(
	R_{jl}\nabla^lf\,g_{ik}
	-
	R_{il}\nabla^lf\,g_{jk}
	\right)\notag\\
	&
	+\frac{R}{n-2}
	\left(
	\nabla_if\,g_{jk}
	-
	\nabla_jf\,g_{ik}
	\right)\\
	&
	+\frac{f}{2(n-1)}
	\left(
	\nabla_iR\,g_{jk}
	-
	\nabla_jR\,g_{ik}
	\right)\notag\\
	&
	-2f
	\left(
	E^\flat_j\nabla_iE^\flat_k
	-
	E^\flat_i\nabla_jE^\flat_k
	\right).\notag
\end{align}

Motivated by the previous identity, and inspired by the $D$-tensor introduced by Cao and Chen \cite{cao} in the study of gradient Ricci solitons, we introduce the following $(0,3)$-tensor.

\begin{definition}[The electrostatic tensor]\label{defV}
	Let $(M^n,g,f,E)$ be an $n$-dimensional electrostatic system. We define the \emph{electrostatic tensor} $V$ as the $(0,3)$-tensor given by
	\begin{align}
		V_{ijk}
		={}&\frac{n-1}{n-2}
		\left(
		R_{ik}\nabla_jf
		-
		R_{jk}\nabla_if
		\right)\notag\\
		&
		+\frac{1}{n-2}
		\left(
		R_{jl}\nabla^lf\,g_{ik}
		-
		R_{il}\nabla^lf\,g_{jk}
		\right)\notag\\
		&
		+\frac{R}{n-2}
		\left(
		\nabla_if\,g_{jk}
		-
		\nabla_jf\,g_{ik}
		\right)\\
		&
		+\frac{f}{2(n-1)}
		\left(
		\nabla_iR\,g_{jk}
		-
		\nabla_jR\,g_{ik}
		\right)\notag\\
		&
		-2f
		\left(
		E^\flat_j\nabla_iE^\flat_k
		-
		E^\flat_i\nabla_jE^\flat_k
		\right).\notag
	\end{align}
\end{definition}

\begin{proposition}\label{prop:trace}
	The electrostatic tensor is totally trace-free.
\end{proposition}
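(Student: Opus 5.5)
The plan is to avoid expanding Definition~\ref{defV} directly and instead use the identity \eqref{final}. By construction, the right-hand side of \eqref{final} is exactly $W_{ijkl}\nabla^lf+V_{ijk}$. Hence on all of $M$
\[
V_{ijk}=fC_{ijk}-W_{ijkl}\nabla^lf .
\]
So $V$ is totally trace-free as soon as both $C$ and $W(\nabla f)$ are. Since $V$ is visibly antisymmetric in $i,j$, the trace over $(i,j)$ vanishes trivially. The traces $g^{ik}V_{ijk}$ and $g^{jk}V_{ijk}$ differ only by a sign, so it suffices to show $g^{ik}V_{ijk}=0$.

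For the Weyl part, $g^{ik}W_{ijkl}\nabla^lf=0$ because the Weyl tensor is trace-free in every pair of indices. For the Cotton part, contract with the formula for $C$:
\[
g^{ik}C_{ijk}=\nabla^kR_{jk}-\nabla_jR-\frac{1}{2(n-1)}\bigl(\nabla_jR-n\nabla_jR\bigr)
=\nabla^kR_{jk}-\frac12\nabla_jR ,
\]
and this vanishes by the contracted second Bianchi identity $\nabla^kR_{jk}=\frac12\nabla_jR$. Combining the two gives $g^{ik}V_{ijk}=0$, and therefore $V$ is totally trace-free.

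The only point needing care is making sure the decomposition in \eqref{final} is exactly right. The identity is obtained by equating two expressions for $R_{ijkl}\nabla^lf$, so a sign or index slip in the Weyl expansion (for instance the $R_{il}\nabla^lf\,g_{jk}$ versus $R_{jl}\nabla^lf\,g_{ik}$ terms) would break the identification of $V$. I will re-derive that step once. As an independent check, one could also contract Definition~\ref{defV} directly. That route uses $\mathrm{div}(E)=0$, Lemma~\ref{lemmaMaxwell}, $\nabla R=2\nabla|E|^2$ from \eqref{rrr}, and the trace of \eqref{combinado} to handle the term $E^\flat_j\nabla_iE^\flat_k$. This bookkeeping with the electric field is where the direct computation would be hardest, and the route through \eqref{final} bypasses it entirely.
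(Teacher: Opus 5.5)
Your proof is correct, but it takes a different route from the paper, which simply contracts Definition~\ref{defV} directly.

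\textbf{The paper's route.} In $g^{ik}V_{ijk}$ the curvature terms cancel among themselves:
\[
\tfrac{n-1}{n-2}(R\nabla_jf-R_{jl}\nabla^lf)+\tfrac{n-1}{n-2}R_{jl}\nabla^lf-\tfrac{n-1}{n-2}R\nabla_jf=0.
\]
What remains is $-\frac{f}{2}\nabla_jR+2fE^k\nabla_jE^\flat_k-2fE^\flat_j\,\mathrm{div}(E)$. This vanishes by $\mathrm{div}(E)=0$ and $\nabla R=2\nabla|E|^2$ from \eqref{rrr}. So the direct route takes a few lines and needs less than you anticipate: neither Lemma~\ref{lemmaMaxwell} nor the Hessian equation enters.

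\textbf{Your route.} You pass through \eqref{final}, whose derivation uses \eqref{combinado}, the Ricci identity, and the Maxwell equation (to cancel the terms $2(\nabla_if\,E^\flat_j-\nabla_jf\,E^\flat_i)E^\flat_k$). That identity does check out. The Ricci-identity sign is consistent with the paper's Weyl decomposition, in which $R_{jl}=g^{ik}R_{ijkl}$. The coefficients combine as $1+\frac{1}{n-2}=\frac{n-1}{n-2}$ and $\frac{R}{n-1}\bigl(1+\frac{1}{n-2}\bigr)=\frac{R}{n-2}$. It is also derived independently of the proposition, so there is no circularity.

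\textbf{What each buys.} Your approach is more conceptual: $V=fC-W(\nabla f)$ is a difference of two totally trace-free tensors. The same observation also yields Proposition~\ref{prop:symmetry}, since both $C$ and $W_{ijkl}\nabla^lf$ are antisymmetric in $i,j$ and have vanishing cyclic sum (the latter by the first Bianchi identity for $W$). The direct computation is more general: it shows the expression defining $V$ is trace-free for any triple $(g,f,E)$ with $\mathrm{div}(E)=0$ and $R-2|E|^2$ constant, without invoking the rest of the electrostatic system.
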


\begin{proof}
	A direct computation shows that
	\[
	g^{ij}V_{ijk}
	=
	g^{ik}V_{ijk}
	=
	g^{jk}V_{ijk}
	=
	0.
	\]
	This completes the proof.
\end{proof}

\begin{proposition}\label{prop:symmetry}
	The electrostatic tensor has the same algebraic symmetries as the Cotton tensor.
\end{proposition}

\begin{proof}
	A direct computation yields
	\[
	V_{ijk}
	=
	-
	V_{jik},
	\]
	and
	\[
	V_{ijk}
	+
	V_{jki}
	+
	V_{kij}
	=
	0.
	\]
	Therefore, $V$ satisfies the same algebraic symmetries as the Cotton tensor.
\end{proof}

The main result of this section is the following identity relating the electrostatic tensor, the Cotton tensor and the Weyl tensor.

\begin{theorem}[Theorem \ref{principal}]
	Let $(M^n,g,f,E)$ be an electrostatic system. Then
	\[
	fC=W(\nabla f)+V,
	\]
	where $C$ and $W$ denote the Cotton and Weyl tensors, respectively.
\end{theorem}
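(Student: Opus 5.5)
The plan is to derive two independent expressions for the contraction $R_{ijkl}\nabla^l f$ and equate them. Definition~\ref{defV} has been chosen so that their difference is exactly $V$, up to the term $W_{ijkl}\nabla^l f$. Throughout we use the convention $W(\nabla f)_{ijk}=W_{ijkl}\nabla^l f$.

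\textbf{First expression.} We differentiate the combined equation \eqref{combinado}, i.e.
\[
\nabla_j\nabla_k f=f\Big(R_{jk}+2E^\flat_jE^\flat_k-\tfrac{R}{n-1}g_{jk}\Big).
\]
Taking $\nabla_i$ and antisymmetrizing in $i,j$, the Ricci identity turns the left-hand side into $R_{ijkl}\nabla^l f$, with the sign convention fixed in Section~\ref{tensor}. On the right we expand by the product rule:
\begin{itemize}
\item the derivative falling on $f$ gives $\nabla_i f\,R_{jk}-\nabla_j f\,R_{ik}$, together with the scalar-curvature term and the terms $2(\nabla_i f\,E^\flat_jE^\flat_k-\nabla_j f\,E^\flat_iE^\flat_k)$;
\item the derivative falling on $\mathrm{Ric}$ and $R$ gives $f(\nabla_iR_{jk}-\nabla_jR_{ik})$ and $-\tfrac{f}{n-1}(\nabla_iR\,g_{jk}-\nabla_jR\,g_{ik})$;
\item the derivative falling on $E^\flat\otimes E^\flat$ gives $2f(\nabla_iE^\flat_j\,E^\flat_k-\nabla_jE^\flat_i\,E^\flat_k)$, plus the terms $2f(E^\flat_j\nabla_iE^\flat_k-E^\flat_i\nabla_jE^\flat_k)$.
\end{itemize}

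\textbf{Main obstacle.} The key step is to show that the $E$-terms not appearing in $V$ cancel. By Lemma~\ref{lemmaMaxwell},
\[
f(\nabla_iE^\flat_j-\nabla_jE^\flat_i)E^\flat_k=(\nabla_j f\,E^\flat_i-\nabla_i f\,E^\flat_j)E^\flat_k .
\]
After multiplying by $2$, this exactly cancels the terms $2(\nabla_i f\,E^\flat_j-\nabla_j f\,E^\flat_i)E^\flat_k$ coming from differentiating $f$. I expect this bookkeeping, together with the sign conventions, to be the delicate point. What survives is only $2f(E^\flat_j\nabla_iE^\flat_k-E^\flat_i\nabla_jE^\flat_k)$, the form recorded in the display preceding the definition of the Cotton tensor.

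Next we insert the Cotton tensor by writing
\[
\nabla_iR_{jk}-\nabla_jR_{ik}=C_{ijk}+\tfrac{1}{2(n-1)}(\nabla_iR\,g_{jk}-\nabla_jR\,g_{ik}).
\]
This leaves a net coefficient $-\tfrac{f}{2(n-1)}$ in front of $(\nabla_iR\,g_{jk}-\nabla_jR\,g_{ik})$.

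\textbf{Second expression.} We contract the Weyl decomposition of $R_{ijkl}$ with $\nabla^l f$. This produces $W_{ijkl}\nabla^l f$, the four Ricci terms with coefficient $\tfrac1{n-2}$, and the scalar-curvature term with coefficient $-\tfrac{R}{(n-1)(n-2)}$.

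\textbf{Comparison.} Equating the two expressions and solving for $fC_{ijk}$, we collect coefficients term by term:
\begin{itemize}
\item for $R_{ik}\nabla_j f-R_{jk}\nabla_i f$: $\tfrac1{n-2}+1=\tfrac{n-1}{n-2}$;
\item for $\nabla_i f\,g_{jk}-\nabla_j f\,g_{ik}$: $\tfrac{R}{(n-1)(n-2)}+\tfrac{R}{n-1}=\tfrac{R}{n-2}$;
\item the $g$-terms involving $R_{il}\nabla^l f$ and $R_{jl}\nabla^l f$ carry coefficient $\tfrac1{n-2}$;
\item the $\nabla R$ term carries $+\tfrac{f}{2(n-1)}$;
\item the $E$-term carries $-2f$.
\end{itemize}
This is exactly identity \eqref{final}. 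Its right-hand side minus $W_{ijkl}\nabla^l f$ coincides term by term with $V_{ijk}$ in Definition~\ref{defV}, which gives $fC=W(\nabla f)+V$.

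As a consistency check we may note that $C$, $W(\nabla f)$ and, by Propositions~\ref{prop:trace} and \ref{prop:symmetry}, $V$ are all trace-free and share the Cotton symmetries. The argument does not need this, however.
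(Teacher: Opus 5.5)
Your proposal is correct and is essentially the paper's own argument. You differentiate \eqref{combinado}, apply the Ricci identity, substitute the Cotton tensor, and compare with the contracted Weyl decomposition to reach \eqref{final}, whose non-Weyl part is $V$ by definition; the one thing you add is making explicit a step the paper performs silently, namely that Lemma~\ref{lemmaMaxwell} makes $2f(\nabla_iE^\flat_j-\nabla_jE^\flat_i)E^\flat_k$ cancel the terms $2(\nabla_i f\,E^\flat_j-\nabla_j f\,E^\flat_i)E^\flat_k$ coming from the derivative of $f$.
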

\begin{proof}
	The identity is exactly equation \eqref{final}.
\end{proof}

The previous propositions show that the tensor $V$ shares the fundamental algebraic properties of the Cotton tensor, reinforcing its role as a natural geometric object associated with electrostatic systems.


\section{The collinearity assumption}\label{colineary}

In this section we investigate the electrostatic tensor $V$ under the additional assumption that the electric field is everywhere collinear with the gradient of the lapse function. More precisely, we assume that there exists a smooth function $\rho$ on $M$ such that
\[
E=\rho\nabla f.
\]

This assumption naturally simplifies the structure of the tensor $V$ and leads to a considerably more explicit expression. Moreover, as shown below, it is automatically satisfied along the boundary of every electrostatic system.

The following lemma extends the boundary collinearity result of Cruz, Lima and Sousa \cite{tiarlos} to arbitrary dimensions. Besides being of independent interest, it provides the motivation for studying the tensor $V$ under the global collinearity assumption.

\begin{lemma}[Lemma \ref{lemmafronteira}]
	Let $(M^n,g,f,E)$ be an $n$-dimensional electrostatic system, $n\ge2$. Suppose that
	\[
	\Sigma=f^{-1}(0)
	\]
	is a regular level set of $f$. Then $E$ is normal to $\Sigma$. More precisely, there exists a smooth function $\rho$ on $\Sigma$ such that
	\[
	E=\rho\nabla f
	\qquad\text{along }\Sigma.
	\]
\end{lemma}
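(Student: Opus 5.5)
The plan is to evaluate the Maxwell identity of Lemma~\ref{lemmaMaxwell} on $\Sigma$, where $f=0$. At such points the left-hand side vanishes, so
\[
\nabla_jf\,E^\flat_i-\nabla_if\,E^\flat_j=0\qquad\text{on }\Sigma,
\]
which says $df\wedge E^\flat=0$ along $\Sigma$. Since $\Sigma$ is a regular level set, $\nabla f\neq0$ on $\Sigma$. A one-form whose wedge with a nonvanishing one-form is zero must be a multiple of it, so $E^\flat=\rho\,df$ pointwise on $\Sigma$, i.e.\ $E=\rho\nabla f$. Concretely, contract the displayed identity with $\nabla^jf$ to get
\[
|\nabla f|^2E^\flat_i=\langle E,\nabla f\rangle\nabla_if ,
\]
which forces
\[
\rho=\frac{\langle E,\nabla f\rangle}{|\nabla f|^2}.
\]
This $\rho$ is visibly smooth on $\Sigma$, because $|\nabla f|^2>0$ there and all quantities are smooth. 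Since $\nabla f$ is normal to the level set $\Sigma$, it follows that $E$ is normal to $\Sigma$.

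Only one point needs care. The standard fact ``$f>0$ on $M$'' must be read as holding in the interior, so that $\Sigma=f^{-1}(0)$ is the boundary. The Maxwell equation $d(fE^\flat)=0$ holds on all of $M$ including $\Sigma$, by smoothness or by continuity from the interior, so evaluating at $f=0$ is legitimate.

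The argument is dimension-independent, and the case $n=2$ causes no issue because only the algebraic identity $df\wedge E^\flat=0$ is used. There is essentially no obstacle. The only step needing a remark is the choice of the formula for $\rho$ via contraction, which guarantees smoothness rather than mere pointwise existence.
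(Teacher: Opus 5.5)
Your proof is correct and follows essentially the same route as the paper: both reduce $d(fE^\flat)=0$ on $\Sigma$ to $df\wedge E^\flat=0$ and then pair with $\nabla f$. The paper evaluates the two-form on $(\nabla f,X)$ for $X\in T_p\Sigma$, while you contract with $\nabla^j f$, and your explicit formula $\rho=\langle E,\nabla f\rangle/|\nabla f|^2$ justifies the smoothness of $\rho$, which the paper only asserts.
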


\begin{proof}
	Since
	\[
	d(fE^\flat)=df\wedge E^\flat+f\,dE^\flat=0,
	\]
	and $f=0$ along $\Sigma$, it follows that
	\[
	df\wedge E^\flat=0
	\qquad\text{on }\Sigma.
	\]
	
	Let $p\in\Sigma$. Since $\Sigma$ is a regular level set,
	\[
	\nabla f(p)\neq0,
	\]
	and therefore
	\[
	T_p\Sigma=\ker(df_p).
	\]
	
	If $X\in T_p\Sigma$, then $df(X)=0$, and hence
	\[
	\begin{aligned}
		0
		&=(df\wedge E^\flat)(\nabla f,X)\\
		&=df(\nabla f)E^\flat(X)-df(X)E^\flat(\nabla f)\\
		&=|\nabla f|^2E^\flat(X).
	\end{aligned}
	\]
	
	Since $|\nabla f|^2>0$ at $p$, we conclude that
	\[
	E^\flat(X)=0
	\qquad\forall\,X\in T_p\Sigma.
	\]
	
	Thus $E^\flat$ annihilates the tangent space of $\Sigma$, so it is proportional to the conormal $df$. Therefore, there exists a smooth function $\rho$ on $\Sigma$ such that
	\[
	E^\flat=\rho\,df
	\qquad\text{along }\Sigma,
	\]
	or equivalently,
	\[
	E=\rho\nabla f
	\qquad\text{along }\Sigma.
	\]
\end{proof}

From now on, we assume that
\[
E=\rho\nabla f
\]
throughout $M$ to prove the following lemmas.

\begin{lemma}\label{lemmaE}
	Under the collinearity assumption
	\[
	E=\rho\nabla f,
	\]
	the covariant derivative of the electric field satisfies
	\[
	\nabla_iE^\flat_k
	=
	\nabla_i\rho\,\nabla_kf
	+f\rho R_{ik}
	+2f\rho^3\nabla_if\nabla_kf
	-\frac{f\rho R}{n-1}g_{ik}.
	\]
\end{lemma}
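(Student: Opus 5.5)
Since $E=\rho\nabla f$, we have $E^\flat_k=\rho\nabla_kf$. The plan is to differentiate this directly and then remove the Hessian using the electrostatic equation \eqref{combinado}. By the Leibniz rule,
\[
\nabla_iE^\flat_k=\nabla_i\rho\,\nabla_kf+\rho\,\nabla_i\nabla_kf .
\]
The only real step is to substitute the Hessian from \eqref{combinado}:
\[
\nabla_i\nabla_kf=f\Big(R_{ik}+2E^\flat_iE^\flat_k-\frac{R}{n-1}g_{ik}\Big).
\]

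Using the collinearity assumption again, $2E^\flat_iE^\flat_k=2\rho^2\nabla_if\nabla_kf$. Multiplying by $\rho$ then gives
\[
\rho\nabla_i\nabla_kf=f\rho R_{ik}+2f\rho^3\nabla_if\nabla_kf-\frac{f\rho R}{n-1}g_{ik}.
\]
Adding the term $\nabla_i\rho\,\nabla_kf$ yields exactly the claimed formula.

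There is no genuine obstacle here. The one point worth recording is bookkeeping. The index placement on $\nabla_i\rho\,\nabla_kf$ is dictated by the derivative falling on $\rho$, so this term is not symmetric in $i,k$. All the other terms are symmetric.

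As a consistency check, the antisymmetric part should be compatible with Lemma \ref{lemmaMaxwell}. Indeed,
\[
f(\nabla_iE^\flat_k-\nabla_kE^\flat_i)=f(\nabla_i\rho\nabla_kf-\nabla_k\rho\nabla_if).
\]
The Maxwell identity, however, gives
\[
\nabla_kf\,E^\flat_i-\nabla_if\,E^\flat_k=\rho(\nabla_kf\nabla_if-\nabla_if\nabla_kf)=0.
\]
Hence $d\rho\wedge df=0$ wherever $f\neq0$. This is consistent, but it is not needed for the proof of the lemma.
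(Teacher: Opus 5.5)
Your proof is correct and follows essentially the same route as the paper: differentiate $E^\flat_k=\rho\nabla_kf$ by the Leibniz rule, substitute the Hessian from \eqref{combinado}, and use collinearity to write $2E^\flat_iE^\flat_k=2\rho^2\nabla_if\nabla_kf$. Your closing consistency check via Lemma~\ref{lemmaMaxwell} is also correct, though, as you say, it is not needed for this lemma.
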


\begin{proof}
	Differentiating 
	\[
		E=\rho\nabla f
	\] and using \eqref{combinado}, we obtain
	\[
	\begin{aligned}
		\nabla_iE^\flat_k
		&=\nabla_i(\rho\nabla_kf)\\
		&=\nabla_i\rho\,\nabla_kf
		+\rho\nabla_i\nabla_kf\\
		&=\nabla_i\rho\,\nabla_kf
		+\rho f\left(
		R_{ik}
		+2\rho^2\nabla_if\nabla_kf
		-\frac{R}{n-1}g_{ik}
		\right)\\
		&=\nabla_i\rho\,\nabla_kf
		+f\rho R_{ik}
		+2f\rho^3\nabla_if\nabla_kf
		-\frac{f\rho R}{n-1}g_{ik}.
	\end{aligned}
	\]
	
\end{proof}

Next, we derive an explicit expression for the gradient of the scalar curvature.

\begin{lemma}\label{lemmaR}
	Under the collinearity assumption,
	\[
	\nabla_iR
	=
	4\rho|\nabla f|^2\nabla_i\rho
	+
	4f\rho^2R_{il}\nabla^lf
	+
	8f\rho^4|\nabla f|^2\nabla_if
	-
	\frac{4f\rho^2R}{n-1}\nabla_if.
	\]
\end{lemma}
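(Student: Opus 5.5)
Since \eqref{rrr} gives $R=2(|E|^2+\Lambda)$ with $\Lambda$ constant, we have $\nabla_iR=2\nabla_i|E|^2=4E^\flat_k\nabla_iE^\flat_k$. The plan is therefore to differentiate \eqref{rrr} and substitute the expression for $\nabla_iE^\flat_k$ from Lemma~\ref{lemmaE}, contracted against $E^\flat_k=\rho\nabla_kf$.

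Concretely, writing $\nabla_iR=4\rho\nabla^kf\,\nabla_iE^\flat_k$ and inserting Lemma~\ref{lemmaE}, the four terms give the following contributions.
\begin{itemize}
\item $\nabla_i\rho\,\nabla_kf$ gives $4\rho|\nabla f|^2\nabla_i\rho$.
\item $f\rho R_{ik}$ gives $4f\rho^2R_{il}\nabla^lf$.
\item $2f\rho^3\nabla_if\nabla_kf$ gives $8f\rho^4|\nabla f|^2\nabla_if$.
\item $-\frac{f\rho R}{n-1}g_{ik}$ gives $-\frac{4f\rho^2R}{n-1}\nabla_if$.
\end{itemize}
Summing these is exactly the claimed formula.

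There is one point of care. Lemma~\ref{lemmaE} was obtained by differentiating $E^\flat_k=\rho\nabla_kf$ in the index $i$. Its ordering of indices (derivative index first) must match the contraction $E^k\nabla_iE_k$, which it does, since we differentiate in the direction $i$ and contract on $k$. No use of the Maxwell equation (Lemma~\ref{lemmaMaxwell}) or of symmetry of $\nabla E^\flat$ is needed. The only slightly delicate step is thus bookkeeping of index positions, and there is no real obstacle.
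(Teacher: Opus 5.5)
Your proof is correct and is essentially the paper's argument. The paper expands $\nabla_iR=2\nabla_i(\rho^2|\nabla f|^2)$ by the product rule and uses \eqref{combinado} to compute $\nabla_i|\nabla f|^2$. You instead contract $4\rho\nabla^kf\,\nabla_iE^\flat_k$ against Lemma~\ref{lemmaE}, which packages the same product rule and Hessian substitution, and your four term-by-term contractions are right.
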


\begin{proof}
	Differentiating equation \eqref{rrr} and using the relation
	\[
	E=\rho\nabla f,
	\]
	we obtain
	\[
	\nabla_i R=2\nabla_i|E|^2
	=4\rho|\nabla f|^2\nabla_i\rho+2\rho^2\nabla_i|\nabla f|^2.
	\]
	From \eqref{combinado},
	\[
	\nabla_i|\nabla f|^2
	=2f\left(R_{il}\nabla^l f+2\rho^2|\nabla f|^2\nabla_i f-\frac{R}{n-1}\nabla_i f\right).
	\]
	Combining these expressions and using \eqref{rrr}, we find
	
	\[
	\begin{aligned}
		\nabla_iR
		={}&\,4\rho|\nabla f|^2\nabla_i\rho
		+4f\rho^2R_{il}\nabla^lf+8f\rho^4|\nabla f|^2\nabla_if
		-\frac{4f\rho^2R}{n-1}\nabla_if.
	\end{aligned}
	\]
\end{proof}

The next lemma provides an explicit formula for the gradient of the function $\rho$, which plays a key role in the simplification of the electrostatic tensor.

\begin{lemma}\label{lemmarho}
	Under the collinearity assumption,
	\[
	|\,\nabla f\,|^2\nabla\rho
	=
	-\frac{2\rho f}{n-1}
	\left(
	(n-1)\rho^2|\nabla f|^2
	-\frac{R}{2}
	\right)
	\nabla f.
	\]
\end{lemma}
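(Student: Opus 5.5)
The plan is to combine the two Maxwell equations in \eqref{s1}. The closedness condition $d(fE^\flat)=0$ will show that $\nabla\rho$ is parallel to $\nabla f$. The divergence condition $\mathrm{div}(E)=0$ will then fix the proportionality factor through the Laplace equation in \eqref{s1}. Working with $|\nabla f|^2\nabla\rho$ rather than $\nabla\rho$ itself is what lets us avoid dividing by $|\nabla f|^2$.

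\emph{Step 1: $\nabla\rho$ is parallel to $\nabla f$.} Under $E=\rho\nabla f$ we have $fE^\flat=f\rho\,df$. Hence
\[
0=d(fE^\flat)=d(f\rho)\wedge df=f\,d\rho\wedge df ,
\]
since $df\wedge df=0$ and $\rho\,df\wedge df=0$. Because $f>0$ on $M$ (one of the standing facts), we get $d\rho\wedge df=0$ everywhere. The same conclusion can be read off from Lemma \ref{lemmaMaxwell}: substituting $E^\flat_j=\rho\nabla_jf$ and using the symmetry of $\nabla^2 f$ reduces it to $f(\nabla_i\rho\nabla_jf-\nabla_j\rho\nabla_if)=0$. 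Evaluating $d\rho\wedge df=0$ on the pair $(\nabla f,X)$ gives
\[
|\nabla f|^2\nabla\rho=\langle\nabla\rho,\nabla f\rangle\nabla f .
\]
This identity holds trivially at points where $\nabla f=0$, because both sides vanish there. So it holds on all of $M$ without any case distinction.

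\emph{Step 2: compute $\langle\nabla\rho,\nabla f\rangle$.} From $\mathrm{div}(E)=0$ we get $0=\nabla_i(\rho\nabla^if)=\langle\nabla\rho,\nabla f\rangle+\rho\Delta f$. Inserting the Laplace equation from \eqref{s1} gives
\[
\langle\nabla\rho,\nabla f\rangle=-\frac{2\rho f}{n-1}\bigl((n-2)|E|^2-\Lambda\bigr).
\]
By \eqref{rrr}, $\Lambda=\tfrac R2-|E|^2$, so $(n-2)|E|^2-\Lambda=(n-1)|E|^2-\tfrac R2$. Using $|E|^2=\rho^2|\nabla f|^2$ and substituting into the identity from Step 1 yields exactly the claimed formula for $|\nabla f|^2\nabla\rho$.

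\emph{Main obstacle.} The only delicate point is making sure no division by $f$ or by $|\nabla f|$ is hidden in the argument. For $f$ this is handled by the standing fact $f>0$ on $M$, and if $M$ has a boundary, the identity extends to $\Sigma=f^{-1}(0)$ by continuity. For $|\nabla f|$ it is handled by stating the identity in the multiplied form $|\nabla f|^2\nabla\rho$, which is why the lemma is phrased that way. Everything else is substitution.
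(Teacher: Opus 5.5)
Your proof is correct and follows the same route as the paper: $d(fE^\flat)=0$ gives $d\rho\wedge df=0$, and then $\mathrm{div}(E)=0$, the Laplace equation and $R=2(|E|^2+\Lambda)$ fix the coefficient. Your version is slightly cleaner because you evaluate $d\rho\wedge df$ on $(\nabla f,X)$ to get $|\nabla f|^2\nabla\rho=\langle\nabla\rho,\nabla f\rangle\nabla f$ directly, whereas the paper writes $\nabla\rho=\mu\nabla f$, which tacitly requires $\nabla f\neq0$.
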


\begin{proof}
	Since
	\[
	\operatorname{div}E=0
	\]
	and
	\[
	E=\rho\nabla f,
	\]
	we have
	\[
	0=\operatorname{div}(\rho\nabla f)
	=\nabla_i(\rho\nabla^if)
	=\nabla_i\rho\,\nabla^if+\rho\,\Delta f.
	\]
	Hence,
	\[
	\langle\nabla\rho,\nabla f\rangle
	=-\rho\,\Delta f.
	\]
	
	Moreover, since
	\[
	d(fE^\flat)=0,
	\]
	it follows that
	\[
	\nabla_i\rho\,\nabla_jf
	-
	\nabla_j\rho\,\nabla_if=0,
	\]
	so that \(\nabla\rho\) is parallel to \(\nabla f\). Therefore, there exists a smooth function \(\mu\) such that
	\[
	\nabla\rho=\mu\nabla f.
	\]
	Taking the inner product with \(\nabla f\), we obtain
	\[
	\mu|\nabla f|^2
	=
	-\rho\,\Delta f,
	\]
	that is,
	\[
	|\nabla f|^2\nabla\rho
	=
	-\rho\,\Delta f\nabla f.
	\]
	Using
	\[
	\Delta f
	=
	2f\left(
	\frac{n-2}{n-1}\rho^2|\nabla f|^2
	-
	\frac{\Lambda}{n-1}
	\right),
	\]
	we get that
	\[
	|\nabla f|^2\nabla\rho
	=
	-\frac{2\rho f}{(n-1)}
	\left(
	(n-2)\rho^2|\nabla f|^2-\Lambda
	\right)\nabla f,
	\]

	Using \eqref{rrr} together with the identity $E=\rho\nabla f$, we conclude
	
	\[
	|\nabla f|^2\nabla\rho
	=
	-\frac{2\rho f}{(n-1)}
	\left(
	(n-1)\rho^2|\nabla f|^2-\dfrac{R}{2}
	\right)\nabla f.
	\]
\end{proof}

\begin{proposition}\label{propv}
	Under the collinearity assumption, the electrostatic tensor can be written as

\begin{align}
	V_{ijk}
	={}&
	\left(
	\frac{n-1-2(n-2)f^2\rho^2}{n-2}
	\right)
	\left(
	R_{ik}\nabla_jf
	-
	R_{jk}\nabla_if
	\right)\notag\\
	&
	-\left(
	\frac{n-1-2(n-2)f^2\rho^2}{(n-1)(n-2)}
	\right)
	\left(
	R_{il}\nabla^lf\,g_{jk}
	-
	R_{jl}\nabla^lf\,g_{ik}
	\right)\notag\\
	&
	+\left(
	\frac{n-1-2(n-2)f^2\rho^2}{(n-1)(n-2)}R\right)
	\left(
	\nabla_if\,g_{jk}
	-
	\nabla_jf\,g_{ik}
	\right)\notag
\end{align}
\end{proposition}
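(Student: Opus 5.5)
The plan is to substitute the collinearity identities directly into Definition~\ref{defV} and collect terms by tensorial type. There are three types: $R_{ik}\nabla_jf-R_{jk}\nabla_if$, then $R_{il}\nabla^lf\,g_{jk}-R_{jl}\nabla^lf\,g_{ik}$, then $\nabla_if\,g_{jk}-\nabla_jf\,g_{ik}$. Only the last two lines of $V$ need work, namely the $\nabla R$ term and the electric term. The first three lines are already of the target shape.

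First I treat the electric term. With $E^\flat=\rho\,df$ and Lemma~\ref{lemmaE}, I expand
\[
E^\flat_j\nabla_iE^\flat_k-E^\flat_i\nabla_jE^\flat_k
\]
into four groups.
\begin{itemize}
\item The group $\rho\nabla_kf(\nabla_i\rho\nabla_jf-\nabla_j\rho\nabla_if)$ vanishes, because $\nabla\rho$ is parallel to $\nabla f$, as shown in the proof of Lemma~\ref{lemmarho}.
\item The group $2f\rho^4\nabla_kf(\nabla_jf\nabla_if-\nabla_if\nabla_jf)$ vanishes by symmetry.
\item What remains is $f\rho^2(R_{ik}\nabla_jf-R_{jk}\nabla_if)$ together with $-\frac{f\rho^2R}{n-1}(g_{ik}\nabla_jf-g_{jk}\nabla_if)$.
\end{itemize}
Multiplying by $-2f$ turns the coefficient $\frac{n-1}{n-2}$ of the first type into $\frac{n-1-2(n-2)f^2\rho^2}{n-2}$, which is the first line of the claim. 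It also produces an $f^2\rho^2R$ contribution to the third type.

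Next I treat the $\nabla R$ term. I insert Lemma~\ref{lemmaR} for $\nabla_iR$ and use Lemma~\ref{lemmarho} to replace $\nabla_i\rho$ by a multiple of $\nabla_if$. The piece $4\rho|\nabla f|^2\nabla_i\rho$ then becomes
\[
-\frac{8\rho^2f}{n-1}\Big((n-1)\rho^2|\nabla f|^2-\tfrac R2\Big)\nabla_if.
\]
This cancels the $8f\rho^4|\nabla f|^2\nabla_if$ piece of Lemma~\ref{lemmaR} exactly, which removes every $|\nabla f|^2$ dependence. So $\nabla_iR$ reduces to $4f\rho^2R_{il}\nabla^lf$ plus a multiple of $f\rho^2R\,\nabla_if$. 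Multiplying by $\frac{f}{2(n-1)}$ and antisymmetrizing against $g$ gives an $f^2\rho^2$ correction to the second type and a further one to the third type. I then add the three contributions to the third type, write everything over the common denominator $(n-1)(n-2)$, and compare with the stated coefficients.

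This coefficient bookkeeping is the main obstacle. The delicate points are the sign convention in the second type, since the claim writes $R_{il}g_{jk}-R_{jl}g_{ik}$ with a minus sign out front, and the exact cancellation of the $|\nabla f|^2$ terms. I would first check the second- and third-type coefficients separately against the trace-free property of Proposition~\ref{prop:trace}: tracing over $i,k$ relates them, which gives an independent consistency check.

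Lemma~\ref{lemmarho} is only usable to solve for $\nabla\rho$ where $\nabla f\neq0$. On the closure of that set the identity follows by continuity. On the interior of $\{\nabla f=0\}$ we have $E=0$, hence $R$ is constant by \eqref{rrr}. There $\nabla_iE^\flat_k=0$ as well, since $E^\flat=\rho\,df$ with $df\equiv 0$. So all terms on both sides involving $\nabla f$, $\nabla R$ or $\nabla E$ vanish, and the identity holds trivially.
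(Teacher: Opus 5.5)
Your proposal is correct and is essentially the paper's argument: substitute Lemmas~\ref{lemmaE}--\ref{lemmarho} into Definition~\ref{defV}, kill $\nabla_i\rho\nabla_jf-\nabla_j\rho\nabla_if$ via the Maxwell equation, and collect terms. Your case analysis on $\{\nabla f=0\}$ is harmless but not needed, since contracting $\nabla_i\rho\nabla_jf=\nabla_j\rho\nabla_if$ with $\nabla^jf$ and using $\operatorname{div}E=0$ gives Lemma~\ref{lemmarho} at every point. When you finish the bookkeeping, the multiple of $f\rho^2R\nabla_if$ turns out to be exactly zero, so $\nabla_iR=4f\rho^2R_{il}\nabla^lf$, and the coefficients are exactly as claimed: $-\frac{1}{n-2}+\frac{2f^2\rho^2}{n-1}=-\frac{n-1-2(n-2)f^2\rho^2}{(n-1)(n-2)}$ for the second type and $\frac{R}{n-2}-\frac{2f^2\rho^2R}{n-1}=\frac{(n-1-2(n-2)f^2\rho^2)R}{(n-1)(n-2)}$ for the third type.
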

\begin{proof}
	Substituting the identities established in Lemmas~\ref{lemmaE}--\ref{lemmarho} into Definition \ref{defV} and simplifying the resulting expression, we obtain the desired formula.
\end{proof}

The coefficient
\[
n-1-2(n-2)f^2\rho^2
\]
appears naturally in the simplified expression of the electrostatic tensor.
Motivated by this observation, we introduce the following scalar function, which
is the natural higher-dimensional counterpart of the one introduced in
\cite{leandro2024electrostatic}.

\begin{definition}\label{defQ}
	Under the collinearity assumption, we define the scalar function
	\[
	Q:=n-1-2(n-2)f^2\rho^2.
	\]
\end{definition}

\begin{lemma}\label{lemmaQ}
	The set
	\[
	\{Q\neq0\}
	\]
	is dense in every electrostatic system $(M^n,g,f,E)$ satisfying
	\[
	E=\rho\nabla f.
	\]
\end{lemma}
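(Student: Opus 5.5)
Suppose, for contradiction, that the open set $\{Q\neq0\}$ is not dense. Then there is a nonempty open set $U\subset M$ on which $Q\equiv0$, that is,
\[
f^2\rho^2=\frac{n-1}{2(n-2)}\qquad\text{on }U .
\]
(For $n\ge3$ the constant is positive and finite; if $n=2$ one checks directly that $Q\equiv1$, so there is nothing to prove.) Since $f>0$ in the interior, this forces $\rho\neq0$ on $U$. By shrinking $U$ we may assume $\rho$ has a fixed sign there, so $f\rho=c$ for a nonzero constant $c$ with $c^2=\frac{n-1}{2(n-2)}$.

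Next we differentiate this relation and feed it into Lemma~\ref{lemmarho}. From $f\nabla\rho+\rho\nabla f=0$ we get $\nabla\rho=-\frac{\rho}{f}\nabla f$. Multiplying by $|\nabla f|^2$ and comparing with Lemma~\ref{lemmarho} gives
\[
-\frac{\rho}{f}|\nabla f|^2\nabla f=-\frac{2\rho f}{n-1}\Big((n-1)\rho^2|\nabla f|^2-\frac R2\Big)\nabla f .
\]

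We now split according to whether $\nabla f$ vanishes on $U$.

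\emph{Case $\nabla f\equiv0$ on an open subset of $U$.} Then $f$ is a positive constant there, so $\rho$ is constant as well. But $E=\rho\nabla f=0$, and by \eqref{s1} we get $\Delta f=-\frac{2\Lambda f}{n-1}$, which forces $\Lambda=0$. In this situation we still have $f\rho=c\neq0$ while $E\equiv0$. This is not immediately contradictory, so here we must use the convention that $\rho$ is only meaningful where $\nabla f\neq0$, or else redefine $\rho$ on $\{\nabla f=0\}$. I expect this degenerate situation to be the main obstacle. I would try to rule it out by analyticity: electrostatic systems are analytic in harmonic coordinates, so $\nabla f\equiv0$ on an open set would force $f$ to be constant globally, a trivial case that has to be excluded.

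\emph{Case $\nabla f\neq0$ on a dense open subset of $U$.} Here we may cancel $\rho\nabla f$ and use $f^2\rho^2=c^2$ to obtain
\[
|\nabla f|^2=\frac{2f^2}{n-1}\Big((n-1)\rho^2|\nabla f|^2-\frac R2\Big)=2c^2|\nabla f|^2-\frac{f^2R}{n-1}.
\]
Since $2c^2=\frac{n-1}{n-2}$, this rearranges to
\[
\frac{1}{n-2}|\nabla f|^2=\frac{f^2R}{n-1}.
\]
By \eqref{rrr}, $R=2(\rho^2|\nabla f|^2+\Lambda)$, and therefore
\[
f^2R=2c^2|\nabla f|^2+2\Lambda f^2=\frac{n-1}{n-2}|\nabla f|^2+2\Lambda f^2 .
\]
Substituting, we get $\frac{1}{n-2}|\nabla f|^2=\frac{1}{n-2}|\nabla f|^2+\frac{2\Lambda f^2}{n-1}$, which gives $\Lambda=0$.

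With $\Lambda=0$ the identity is consistent, so a further relation is needed to reach a contradiction. I would use the Laplace equation $\Delta f=\frac{2(n-2)}{n-1}f\rho^2|\nabla f|^2=\frac{|\nabla f|^2}{f}$, which says $\Delta(\log f)=0$, that is, $\log f$ is harmonic on $U$. I would then combine this with the Hessian equation \eqref{combinado} traced against $\nabla f\otimes\nabla f$, together with Lemma~\ref{lemmaR}, to derive an overdetermined system. The aim is to show it forces $\nabla f=0$, contradicting the assumption of this case.

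This last step is where the real difficulty lies. If it cannot be closed, the density statement would have to rely on an additional analyticity argument: $Q$ is analytic, so if $Q$ vanishes on an open set it vanishes identically. The contradiction would then be produced at the boundary $\Sigma=f^{-1}(0)$, where $f\rho\to0$ while $Q\equiv0$ would require $f^2\rho^2=\frac{n-1}{2(n-2)}\neq0$. In the boundaryless case one would instead appeal to the asymptotics of $f$.
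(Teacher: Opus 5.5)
Your derivation is correct up to the conclusion $\Lambda=0$ on the open set $U$ where $Q\equiv0$, and in substance it is the paper's argument. The paper avoids your case split by not going through Lemma~\ref{lemmarho}. It pairs $\nabla(f^2\rho^2)=0$ with $\nabla f$ and uses $\operatorname{div}E=\rho\Delta f+\langle\nabla\rho,\nabla f\rangle=0$ directly, obtaining $\rho^2|\nabla f|^2=f\rho^2\Delta f$. Together with the Laplace equation and $f^2\rho^2=\frac{n-1}{2(n-2)}$, this gives $\Lambda=0$ pointwise on $U$, whether or not $\nabla f$ vanishes. The paper then declares a contradiction with a ``standing assumption'' $\Lambda\neq0$. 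That hypothesis appears neither in the statement of the lemma nor anywhere earlier in the paper.

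The genuine gap in your proposal is everything after $\Lambda=0$, and it cannot be filled. For $\Lambda=0$ the statement is false, so no overdetermined-system or unique-continuation argument will produce a contradiction. For $n\ge3$ take $M=\mathbb{R}^n\setminus\{0\}$, $g=U^{2/(n-2)}\delta$ with $U=1+m|x|^{2-n}$, $m>0$, $f=U^{-1}$, and $E=c\,\nabla f/f$ with $c^2=\frac{n-1}{2(n-2)}$, i.e.\ electric potential $\psi=cf$. This is the spatial slice of the extremal Reissner--Nordstr\"om (Majumdar--Papapetrou) solution, and it satisfies \eqref{s1} with $\Lambda=0$. The Hessian equation is the static Einstein--Maxwell equation, while $\operatorname{div}E=0$ and the Laplace equation reduce to the $g$-harmonicity of $\log f$ that you found yourself. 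Moreover $E=\rho\nabla f$ with $\rho=c/f$ smooth, so $f\rho\equiv c$ and $Q\equiv0$ on all of $M$.

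Your fallback fails on exactly this example. There is no regular level set $f^{-1}(0)$: the degenerate horizon is an infinitely distant cylindrical end on which $f\to0$ but $\rho=c/f\to\infty$, so $f\rho\not\to0$. At infinity $f\to1$ while $f\rho\equiv c$, so the asymptotics give nothing either. What is missing is therefore a hypothesis rather than an argument. Assume $\Lambda\neq0$, as the paper's proof tacitly does, and your proof is complete at the moment you obtain $\Lambda=0$, in both of your cases.
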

\begin{proof}
	Assume, for contradiction, that there exists a nonempty open subset
	$\Omega\subset M$ on which
	\[
	n-1-2(n-2)f^2\rho^2=0.
	\]
	
	Differentiating the above identity, we obtain
	\[
	f\rho^2\nabla f+f^2\rho\nabla\rho=0.
	\]
	Taking the inner product with $\nabla f$ and using
	$|E|^2=\rho^2|\nabla f|^2$, we obtain
	\[
	|E|^2+f\rho\langle\nabla\rho,\nabla f\rangle=0.
	\]
	
	On the other hand, by \eqref{s1},
	\[
	0=\operatorname{div}(E)
	=\rho\Delta f+\langle\nabla\rho,\nabla f\rangle
	=\frac{2}{n-1}\rho f[(n-1)2|E|^2-\Lambda]
	+\langle\nabla\rho,\nabla f\rangle.
	\]
	Hence,
	\[
	f\rho\langle\nabla\rho,\nabla f\rangle
	=\frac{2}{n-1}\rho^2f^2[\Lambda-(n-2)|E|^2]
	=\frac{1}{n-2}[\Lambda-(n-2)|E|^2].
	\]
	
	Comparing the last two identities, we obtain
	\[
	|E|^2+\frac{1}{n-2}[\Lambda-(n-2)|E|^2]=0,
	\]
	which implies
	\[
	\Lambda=0.
	\]
	This contradicts the standing assumption that $\Lambda\neq0$.
	Therefore, the set
	\[
	\{n-1-2(n-2)f^2\rho^2\neq0\}
	\]
	is dense in \(M\).
\end{proof}

\begin{remark}
	The explicit expression obtained in Proposition\ref{propv} suggests that the tensor $V$
	admits a remarkably simpler structure under the collinearity assumption. The
	function
	\[
	Q=n-1-2(n-2)f^2\rho^2
	\]
	naturally appears as the coefficient of the Ricci terms in the expression of
	$V$. It should be viewed as the higher-dimensional analogue of the function
	introduced in \cite{leandro2024electrostatic}, where it plays a similar role in the three-dimensional theory.
\end{remark}

\section{Final remarks}

The tensor introduced in this paper provides a natural counterpart of the D-tensor for electrostatic systems. Besides its algebraic properties, the identities established here suggest that V may play an important role in rigidity questions for electrostatic systems.

Future work will investigate the norm of V, divergence identities and possible rigidity results arising from the interaction between V, the Cotton tensor and the Weyl tensor.

	\bibliographystyle{amsplain}

\end{document}